\documentclass[a4paper,12pt]{article}
\usepackage[longend,noline]{algorithm2e}
\usepackage{amsfonts}
\usepackage{amsmath}
\usepackage{amsthm}
\usepackage{enumerate}
\usepackage{todonotes}

\title{\textbf{Approximating the position of a hidden agent in a graph}}
\author{Hannah Guggiari\thanks{Mathematical Institute, University of Oxford, Oxford, OX2 6GG, United Kingdom. Email:\texttt{\{guggiari,robertsa,scott\}@maths.ox.ac.uk}}, Alexander Roberts\footnotemark[1], Alex Scott\footnotemark[1] \thanks{Supported by a Leverhulme Trust Research Fellowship.}}
\date{\today}

\newtheorem{theorem}{Theorem}[section]
\newtheorem{conjecture}[theorem]{Conjecture}
\newtheorem{lemma}[theorem]{Lemma}
\newtheorem{claim}[theorem]{Claim}
\newtheorem{prop}[theorem]{Proposition}

\newtheorem{question}[theorem]{Question}

\theoremstyle{definition}

\newcommand{\bN}{\mathbb{N}}
\newcommand{\cR}{\mathcal{R}}
\newcommand{\wt}{\widetilde}
\newcommand{\rad}{\textup{rad}}
\newcommand{\diam}{\textup{diam}}

\makeatletter
\def\blfootnote{\xdef\@thefnmark{}\@footnotetext}
\makeatother

\setlength{\AlCapSkip}{1em}

\begin{document}
\newpage
\maketitle
\blfootnote{With thanks to the Bellairs Research Institute where some of this work was done.}
\begin{abstract}
\noindent
A cat and mouse play a pursuit and evasion game on a connected graph $G$ with $n$ vertices. The mouse moves to vertices $m_1,m_2,\dots$ of $G$ where $m_i$ is in the closed neighbourhood of $m_{i-1}$ for $i\geq2$. The cat tests vertices $c_1,c_2,\dots$ of $G$ without restriction and is told whether the distance between $c_i$ and $m_i$ is at most the distance between $c_{i-1}$ and $m_{i-1}$. The mouse knows the cat's strategy, but the cat does not know the mouse's strategy. We will show that the cat can determine the position of the mouse up to distance $O(\sqrt{n})$ within finite time and that this bound is tight up to a constant factor. This disproves a conjecture of Dayanikli and Rautenbach.
\end{abstract}
\noindent
\section{Introduction}
There are a wide variety of different pursuit and evasion games -- games where one player tries to localise a second player moving along the edges of a graph by using information about the current position of the second player.

One of the most widely studied games is Cops and Robbers, which was introduced by Nowakowski and Winkler \cite{NW83} and independently by Quilliot \cite{Q78}. One player controls a set of cops and the other player is the robber. At the start of the game, the cops are placed on vertices of $G$. The robber then chooses a vertex. The players take alternate turns with the cops playing first. On the cops' turn, each cop moves to a vertex within the closed neighbourhood of their current position and multiple cops may occupy the same vertex. The robber plays in a similar fashion, moving to a vertex within its current closed neighbourhood. The cops win if one of them is on the same vertex as the robber after a finite period of time; otherwise the robber wins. In this game, the players have perfect information and so they know the location of both the cops and robber at all times. One of the main open questions is Meyniel's Conjecture which states that, on any $n$-vertex graph, $O(\sqrt{n})$ cops can catch the robber. See \cite{BB12} for a general account and \cite{FKL12,LP12,SS11} for the best current bounds.

Haslegrave \cite{H14} proposed a different game, Cat and Mouse, in which the cat wins by finding the mouse in finite time. As in Cops and Robbers, the players take alternate turns. However, in this game, the cat has no information about where the mouse is and, on its turn, it is not constrained by the edges of $G$ and may test any vertex to see if the mouse is there. The mouse knows the cat's strategy in advance. The mouse is also required to move to a neighbouring vertex on each turn. These differences mean that the strategies employed by the players in Cat and Mouse differ greatly from those used in Cops and Robbers.

In this paper, we will study a variation of the Cat and Mouse pursuit and evasion game introduced by Dayanikli and Rautenbach \cite{DR18}. The rules are as described above, except the mouse is not required to move on its turn. A time step consists of one turn for each player with the mouse playing first. After the mouse has moved, the cat chooses any vertex and is told whether the distance between itself and the mouse has increased compared with the previous time step. The cat wins if it is able to localise the mouse up to a given distance $d$ within finite time (i.e. it can determine a vertex $v$ such that the mouse is at distance at most $d$ from $v$). Otherwise, the mouse wins. A more formal description of the game, including a precise definition of what it means for the cat to localise the mouse, is given in Section \ref{sec:game}.

Dayanikli and Rautenbach \cite{DR18} proved that, if $G$ is a tree with maximum degree $\Delta$ and radius $h$, then the cat can localise the mouse up to distance $4\Delta-6$ within time $O(h\Delta)$. They also showed that the cat can localise the mouse up to distance 8 within time $O(\log n)$ on the $n\times n$ grid. These results led them to make the following conjecture.
\begin{conjecture}
\label{conj:logn}
The cat can localise the mouse up to distance $O(\log n)$ on any connected graph of order $n$.
\end{conjecture}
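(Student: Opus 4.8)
The plan is to prove the upper bound by reducing the localisation problem to a hierarchical binary search on a recursive separator decomposition of $G$. First I would fix a spanning structure: take a BFS tree and build its centroid decomposition, or more generally a balanced separator hierarchy, so that $G$ is organised into $O(\log n)$ levels where at each level a small separator $S$ splits the current candidate region into parts each containing at most half of the remaining vertices. The high-level aim is to spend only $O(1)$ probes at each of the $O(\log n)$ levels to decide which part of the separation contains the mouse, so that after all levels the candidate region has bounded diameter, plus a slack that I will argue is only $O(\log n)$.

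The core gadget is a procedure that converts the relative distance-comparison bits into usable locational information. The only feedback the cat receives at step $i$ is the single bit $[\,d(c_i,m_i)\le d(c_{i-1},m_{i-1})\,]$. I would exploit the fact that if the cat probes the same landmark vertex $v$ at two consecutive steps, then the bit reports the sign of $d(v,m_i)-d(v,m_{i-1})$, which lies in $\{-1,0,+1\}$ because the mouse moves at most one edge per step; thus fixing a probe vertex reveals whether the mouse is moving towards or away from $v$. By choosing landmarks on both sides of the current separator and reading these gradients, the cat should be able to triangulate which component of $G\setminus S$ the mouse currently occupies, analogously to how the tree strategy of Dayanikli and Rautenbach \cite{DR18} follows the mouse towards the root.

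To control error accumulation I would track a candidate set $R_i\subseteq V(G)$ consistent with all bits seen so far and show it shrinks geometrically. Because the mouse moves at most one step while the cat spends $O(1)$ probes per level, $R_i$ can expand by only a bounded amount between levels, whereas a balanced separator at each level removes a constant fraction of its diameter; iterating over $O(\log n)$ levels should leave $\diam(R_i)=O(\log n)$, at which point the cat outputs any vertex of $R_i$ and localises the mouse to within $O(\log n)$. A careful invariant maintenance, ensuring that the separator remains balanced relative to the shifting $R_i$ and that re-centring the search after the mouse moves costs only $O(1)$ probes, would complete the induction.

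I expect the main obstacle to be the weakness of the feedback: each bit is a comparison between two different mouse positions and two different probe locations, so the clean gradient picture above is fragile when the mouse stays still or oscillates to produce ambiguous bit sequences, and a single separator crossing can be hidden inside the step-by-step noise. The delicate part is therefore to design probe schedules whose comparison outcomes are monotone and robust against every legal mouse response, and to prove that the candidate region genuinely contracts by a constant factor per level rather than being stalled indefinitely by an adversarial mouse; if the contraction can be stalled even on some pathological graphs, the $O(\log n)$ target would have to be weakened, which is precisely the tension this conjecture sits on.
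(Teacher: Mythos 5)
The statement you are trying to prove is false, and this paper's main lower-bound result (Theorem \ref{thm:lower}, via Proposition \ref{prop:lower}) is a disproof of it: on the spider tree obtained from $K_{1,t}$ by subdividing every edge $t-1$ times (so $n=t^2+1$), the mouse can evade localisation up to distance $\sqrt{n}/12$ forever. No probe schedule, however clever, can achieve $O(\log n)$ on this graph, so your plan cannot be repaired; the true answer is $\Theta(\sqrt{n})$, with the matching upper bound given by Theorem \ref{thm:upper}.

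It is worth seeing exactly where your argument breaks on this example, because it pinpoints the weakness you half-anticipated in your last paragraph. The natural balanced separator of the spider is the centre $u$, but $G\setminus\{u\}$ has $t=\Theta(\sqrt{n})$ components, and the feedback bit cannot be used to binary-search among them: for a probe $c_i$ on one branch, the quantity $d(c_i,m_i)$ factors through the centre for every mouse position on any \emph{other} branch, so all unprobed branches are mutually indistinguishable and a probe effectively answers only ``is the mouse on this one branch (and roughly how far out).'' Your gradient gadget (probing the same landmark twice) only reveals motion towards or away from $u$, which again does not separate branches. Hence the cat needs $\Omega(\sqrt{n})$ probes to scan the branches, and in that time the mouse --- who knows the cat's strategy --- walks to the centre and back out along a branch that will not be probed for the next $\Omega(\sqrt{n})$ steps, resetting the candidate set to contain far-apart vertices on several branches. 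This is precisely the strategy in the paper's proof: the mouse maintains two consistent trajectories $(m_i)$ and $(w_i)$ with $d(m_i,w_i)>t/6$ at all times, so $\rad_G(M_i)>t/12$ for every $i$. Your invariant that the candidate region ``expands by only a bounded amount between levels'' fails here: uncertainty regenerates at rate $\Theta(\sqrt{n})$ per sweep, not $O(1)$ per level.
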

\noindent
In Section \ref{sec:lower}, we will show that Conjecture \ref{conj:logn} is false. We will prove the following theorem.
\begin{theorem}
\label{thm:lower}
For all $n$ sufficiently large, there exists a tree $T$ on $n$ vertices such that, regardless of the strategy employed by the cat, the mouse is able to avoid being localised up to distance $\sqrt{n}/12$ on $T$ for an infinite period of time.
\end{theorem}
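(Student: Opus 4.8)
The plan is to construct $T$ as a spider: a centre $v_0$ together with $k$ legs, each a path of $\ell$ edges, so that $n=k\ell+1$, and to take $k,\ell=\Theta(\sqrt n)$ with $k$ somewhat larger than $\ell$ (concretely $\ell\approx\sqrt n/6$ and $k\approx n/\ell\approx 6\sqrt n$). Writing $d=\sqrt n/12$, two vertices at depth at least $\ell-1$ on two \emph{different} legs are at distance at least $2(\ell-1)>2d$, so no ball of radius $d$ contains both. It therefore suffices for the mouse to play as an adversary that, answering the cat's comparison queries online, maintains a set $S_i$ of mouse-trajectories, all consistent with the bits revealed so far, with the property that at every time $i$ some two trajectories in $S_i$ end at depth $\ge\ell-1$ on two distinct legs. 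Since the cat's query $c_i$ depends only on the revealed bits and the mouse knows the cat's strategy, the adversary may compute $c_i$ before committing its move; as all trajectories in $S_i$ produce the same bit history, the cat's knowledge set contains $S_i$ and it can never correctly localise the mouse to within $d$.

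The key tool I would develop is a description of which trajectories a single comparison can eliminate. For a trajectory currently at $m$, a legal move changes its distance to the new query $c_i$ by $\epsilon\in\{-1,0,+1\}$ (keeping phantoms at depth $\le\ell-1$, never exactly at a tip, guarantees all three values are available), so the sign of $d(c_i,m_i)-d(c_{i-1},m_{i-1})$ is the sign of $\Delta_i(m)+\epsilon$, where
\[
\Delta_i(m)=d(c_i,m)-d(c_{i-1},m)=L_i-2p_i(m),\qquad L_i=d(c_{i-1},c_i),
\]
and $p_i(m)$ is the distance from $c_{i-1}$ to the point where the path from $m$ meets the $c_{i-1}$--$c_i$ geodesic. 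Hence a trajectory is \emph{flexible} (can match either answer) exactly when $|\Delta_i(m)|\le1$, i.e.\ when $m$ branches off near the midpoint of the query geodesic, and is otherwise \emph{forced}. The structural miracle of the spider is that \emph{every} phantom on a leg disjoint from $c_{i-1}$ and $c_i$ branches off at $v_0$ and so shares one common value of $p_i$, independent of its depth; all such phantoms are thus flexible together, or forced together in one direction. The forced phantoms that could disagree with the adversary's chosen bit are confined to the (at most two) legs containing $c_{i-1}$ and $c_i$, and I would show the adversary can always pick the bit so as to retain the whole common group and discard the deep phantoms of at most one leg per step.

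With peeling bounded by one leg per step, the remaining ingredient is regeneration. A single phantom sitting at $v_0$ may be continued into all $k$ legs simultaneously, so an outward wave launched from the centre occupies depth $s$ on every leg at the $s$-th subsequent step and repopulates all $k$ tips after $\ell$ steps; its phantoms branch off at or near $v_0$, hence stay flexible and survive. The plan is to run a perpetual circulation in which a courier phantom dives inward to $v_0$ and fans back out as a fresh wave, launching a new wave from the centre every $\ell$ steps. Then the tip region is always held by a wave of age at most $\ell$ which, having lost at most one leg per step, still occupies at least $k-\ell\ge2$ distinct legs at depth $\ge\ell-1$; since an earlier trajectory on a peeled leg may legitimately be replaced by a later wave-trajectory whose history passes through $v_0$, this regeneration is consistent. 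Choosing $k$ a suitable constant multiple of $\ell$ makes the race inequality $k>\ell+2$ hold forever, and the resulting spread $2(\ell-1)>2d$ yields Theorem~\ref{thm:lower}.

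The main obstacle, and the reason the naive ``one phantom per leg'' bookkeeping is inadequate, is that only a single global bit is revealed each step, so every retained trajectory must match the \emph{same} answer: two groups of phantoms forced in opposite directions cannot both be kept. The crux is therefore the branch-point analysis above, showing that \emph{all} deep holding tips together with the entire rising wave share the centre as their branch point and hence are never split against one another, so that the adversary is forced to sacrifice only the single peeled leg at each step. Verifying that couriers, rising waves and holding tips can be kept simultaneously flexible (or forced in the retained direction) at every step, and then optimising the constants in the race to deliver exactly the bound $\sqrt n/12$, is the part I expect to require the most care.
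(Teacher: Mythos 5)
Your overall framework (an online adversary maintaining a set of bit-consistent phantom trajectories, two of which always sit deep on distinct legs) is legitimate and genuinely different from the paper's proof, which keeps only \emph{two} phantom trajectories and exploits the mouse's knowledge of the cat's strategy to look ahead and park them on branches the cat will not query for $\Theta(\sqrt n)$ steps. However, your proposal has a genuine gap at exactly the point you flag as needing the most care: it conflates flexibility of the \emph{bit} with freedom of the \emph{move}. All phantoms branching at $v_0$ do share the same $\Delta_i$, so they agree on which bits are achievable; but once a single bit is fixed, each phantom's permissible moves are constrained by the sign of $\Delta_i+\epsilon$, and in the flexible regime $\Delta_i\in\{0,1\}$ one of the two bits forces \emph{every} off-leg phantom to move strictly outward (for $\Delta_i=0$, bit $0$ requires $\epsilon=+1$) while the other forbids outward moves (bit $1$ requires $\epsilon\le 0$); the case $\Delta_i=1$ is symmetric with inward moves. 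Consequently a courier moving inward, tips holding still, and a wave moving outward can never all be consistent with one announced bit when $\Delta_i\in\{0,1\}$: an inward mover and an outward mover branching at $v_0$ produce \emph{different} bits precisely in that regime.

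The cat can force this conflict at every single step, for instance by always querying at a fixed distance $p\ge1$ from the centre while cycling through the legs: then $\Delta_i=0$ for all off-leg phantoms at every step. Under this attack the circulation you describe collapses: the wave can gain depth only on bit-$0$ steps (or on the rare steps when its own leg is queried and its move is unconstrained), but every bit-$0$ step also pushes the holding tips outward by one, and tips at depth $\ge\ell-1$ can absorb only a bounded number of such pushes before being driven past the leaf ends and eliminated. So the old tips die long before a new wave, which needs $\ell-1$ outward steps, can mature; the race inequality $k>\ell+2$ does not help because the bottleneck is movement synchronization, not the number of legs. This is precisely the difficulty the paper's proof engineers around: its two trajectories move as $\{$inward, hold$\}$ when $d(c_i,u)\le d(c_{i-1},u)$ and as $\{$hold, outward$\}$ otherwise --- pairings that are consistent with a single bit for every value of $\Delta_i$, never requiring inward and outward simultaneously --- and it regenerates not by a standing wave but by look-ahead: running to the centre and out along a branch that the simulated future cat will not query for a long stretch. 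To repair your argument you would need either a two-caste scheme of this kind or a genuine scheduling argument showing the move conflicts can be avoided, neither of which the proposal provides.
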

\noindent
In Section \ref{sec:upper}, we will prove that the cat can localise the mouse up to distance $O(\sqrt{n})$ on any simple, undirected and connected graph of order $n$ within time $O(\sqrt{n})$. In particular, we will prove the following theorem.
\begin{theorem}
\label{thm:upper}
The cat can localise the mouse up to distance $\sqrt{32n}$ by time $\sqrt{2n}$.
\end{theorem}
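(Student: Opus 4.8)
The plan is to reduce the whole problem to a single task -- find, among a small family of ``landmark'' vertices, one that is near the mouse -- and then to solve that task by a tournament robust enough to survive the mouse's movement. To set up the landmarks, I would fix a spanning tree $H$ of $G$ and take the closed walk $W=(w_0,w_1,\dots,w_{2(n-1)})$ that traverses $H$ in depth-first order, so that consecutive vertices of $W$ are adjacent in $H\subseteq G$ and every vertex of $G$ occurs on $W$. Marking one vertex every $\rho$ steps along $W$ produces a set $C=\{x_1,\dots,x_K\}$ of $K\le 2n/\rho+1$ centres with the property that every vertex of $G$, and hence the mouse at every time, lies within distance $\rho$ of some $x_j$; in particular there is always a centre within $\rho$ of the mouse's final position. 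The radius $\rho$ is left as a parameter to be optimised at the end.

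The key primitive is a noisy comparison of the mouse's distances to two chosen vertices. If the cat probes $u$ at one step and $w$ at the next, the reported bit is $[\,d(w,m_i)\le d(u,m_{i-1})\,]$, and since the mouse traverses at most one edge between the probes we have $|d(w,m_i)-d(w,m_{i-1})|\le 1$; thus the bit decides whether $w$ or $u$ is closer to the mouse up to an additive error of $1$. Iterating this, the cat runs a running-minimum tournament over $x_1,\dots,x_K$: after initialising the champion to $x_1$, each round probes the current champion $B$ and then the next challenger $x_j$ (re-probing $B$ so its distance is always measured freshly), and keeps whichever vertex the bit declares closer. Each round costs two steps, so the tournament finishes within $2K-O(1)$ steps, and at the end the cat outputs the final champion.

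The heart of the argument is a potential bound on $\Phi_r:=d(B_r,m_{2r})$, the champion's distance to the mouse at the end of round $r$. A short case analysis using the movement constraint and the error-$1$ primitive gives $\Phi_r\le\Phi_{r-1}+2$ in every round, whereas in the round $r^{*}$ that processes the centre $x_{j^{*}}$ nearest the final position $m_T$ one obtains the reset $\Phi_{r^{*}}\le d(x_{j^{*}},m_{2r^{*}})+1\le\rho+2K+1$, because $x_{j^{*}}$ is within $\rho$ of $m_T$ and the mouse drifts by at most $2K$ over the $\le 2K$ steps of the game. Combining the reset with the per-round increment shows $\Phi_{\mathrm{final}}\le\rho+O(K)$, so the cat localises the mouse up to distance $\rho+O(n/\rho)$ within $O(n/\rho)$ steps. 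Taking $\rho$ of order $\sqrt{8n}$ yields $K\approx\sqrt{n/2}$ centres, a running time of about $\sqrt{2n}$, and a localisation radius of order $\sqrt{n}$; the remaining work is to track the constants so that they meet the stated bounds of $\sqrt{2n}$ and $\sqrt{32n}$.

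I expect the main obstacle to be precisely this constant-tracking, because there are three separate sources of error that each scale like $K=\Theta(\sqrt n)$: the $\pm1$ slack of every comparison, the staleness of the champion between the rounds in which it is re-measured, and the drift of the mouse between the time a centre is probed and the final time $T$. Keeping their combined contribution below $\sqrt{32n}-\rho$ while simultaneously holding the number of rounds below $\tfrac12\sqrt{2n}$ is what pins down the choice of $\rho$ and the exact form of the tournament schedule, and getting the accounting tight enough to land on the stated constants (rather than merely $O(\sqrt n)$) is the delicate part.
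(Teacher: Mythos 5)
Your proposal is correct and is essentially the paper's own proof: your champion--challenger tournament (probe the current champion, then the next landmark, keep whichever the bit declares closer, two steps per round) is exactly the paper's Algorithm \ref{fatalgorithm}, and your potential analysis --- increment of at most $2$ per round, plus a reset to $\rho + O(K)$ at the round whose challenger is the landmark nearest the mouse's final position --- is precisely the proof of Claim \ref{claim:fat}, i.e.\ Lemma \ref{lem:fat} (a cover by $L$ balls of radius $k$ lets the cat localise up to $4L+k$ by time $2L$). The only difference is cosmetic: you construct the cover of roughly $2n/\rho$ balls of radius $\rho$ from a depth-first closed walk of a spanning tree, whereas the paper takes a maximal $c\sqrt{n}$-separated set; both give the same trade-off (number of balls times radius $\approx 2n$), so the optimisation lands on the same constants $\sqrt{32n}$ and $\sqrt{2n}$.
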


\noindent
Together, these two results show that the cat can localise the mouse up to distance $O(\sqrt{n})$ on any simple, undirected and connected graph of order $n$, and that this bound is tight up to a constant factor.

\section{Rules of the Game}
\label{sec:game}
There are two players, the \textit{cat} and the \textit{mouse}, and the game is played on a simple, undirected, connected graph $G$ of order $n$. It proceeds in discrete time steps, which are labelled by the positive integers.

The mouse is only able to move along the edges of $G$. Let $m_i$ be the position of the mouse at time $i$. For $i\geq2$, $m_i$ belongs to the closed neighbourhood of $m_{i-1}$, that is $m_i$ is either $m_{i-1}$ or a neighbour of $m_{i-1}$.

The cat may test any vertex of $G$ without any restrictions. At time $i$, the cat chooses a vertex $c_i$. At the end of time step $i\geq2$, the cat is told the value of $b_i$ where
\begin{equation*}
b_i=
\begin{cases}
1&\text{if }d(c_i,m_i)\leq d(c_{i-1},m_{i-1})\\
0&\text{if }d(c_i,m_i)>d(c_{i-1},m_{i-1}).
\end{cases}
\end{equation*}
where $d(u,v)$ denotes the length of a shortest path connecting vertices $u$ and $v$. The value of $b_i$ tells the cat whether it is further away from the mouse at step $i$ than it was at step $i-1$. The cat has no information about the location of the mouse when it chooses $c_1$ and $c_2$. However, for $i\geq3$, the cat can use $b_1,\dots,b_{i-1}$ to help it decide which vertex to choose for $c_i$. For each $i$, the cat can calculate $M_i$, the set of possible positions for $m_i$. A vertex $v$ belongs to $M_i$ if and only if there are vertices $\wt{m}_1,\dots,\wt{m}_i$ satisfying:
\begin{itemize}
\item $v=\wt{m}_i$
\item $\wt{m}_j$ is in the closed neighbourhood of $\wt{m}_{j-1}$ for every $2\leq j\leq i$
\item for every $2\leq j\leq i$, $d(c_j,\wt{m}_j)\leq d(c_{j-1},\wt{m}_{j-1})$ if and only if $b_j=1$.
\end{itemize}

Let $f:\bigcup_{i\in\bN}\{0,1\}^i\rightarrow V(G)$. We say that the cat \textit{follows strategy $(c_1,c_2,f)$} if $c_1$ and $c_2$ are the first two vertices it tests and $c_{i}=f(b_2,\dots,b_{i-1})$ for $i\geq3$. We say that \textit{the cat can localise the mouse up to distance $d$ within time $t$ on $G$} if there is some strategy $f$ such that, whenever the cat follows strategy $f$, there exists $i\leq t$ with $\rad_G(M_i)\leq d$, where the \textit{radius} of a set $W$ of vertices  is defined as
\begin{equation*}
\rad_G(W)=\min\big\{\max\{d(v,w):w\in W\}:v\in V(G)\big\}.
\end{equation*}

The cat only knows $G$ and the values $b_i$ (and hence can calculate the sets $M_i$). The mouse has more information. As well as $G$, the mouse also knows about the strategy followed by the cat. Therefore, the mouse knows how the cat will choose its vertices and can adapt its own strategy accordingly.

Note that the cat must follow a strategy $f:\bigcup_{i\in\bN}\{0,1\}^i\rightarrow V(G)$ which has been decided before the game begins. We do not allow the cat to follow a random strategy (as we require the cat to be certain of localising the mouse within given time and distance).

\section{Lower Bound}
\label{sec:lower}
Before we prove Theorem \ref{thm:lower} for every $n\in\bN$, we will first show that it holds for some special cases. We will prove the following proposition.
\begin{prop}
\label{prop:lower}
Let $t$ be a multiple of 12 and take $n=t^2+1$. There exists a tree $T$ on $n$ vertices such that, regardless of the strategy employed by the cat, the mouse is able to avoid being localised up to distance $\frac{t}{12}$ on $T$ for an infinite period of time.
\end{prop}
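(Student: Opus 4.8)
The plan is to take $T$ to be the spider with a central vertex $r$ and $t$ legs, each a path of length $t$; since $t$ legs of $t$ vertices together with $r$ give $t^2+1=n$ vertices, this is a tree on $n$ vertices. Write $v_{\ell,\delta}$ for the vertex at distance $\delta$ from $r$ on leg $\ell$, and call $\delta$ its depth. The first thing I would record is a ``radius criterion'': if at some time $i$ the set $M_i$ contains two vertices of depth greater than $t/12$ lying on two distinct legs, then those vertices are at distance greater than $t/6$, so $\rad_T(M_i)>t/12$. Conversely a single deep point does not suffice, since a tester placed next to it covers it. Thus the task reduces entirely to the mouse maintaining, for every $i$, the invariant that at least two distinct legs carry an $M_i$-point of depth exceeding $t/12$.

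The engine of the argument is a symmetry computation describing how a single test distinguishes legs. Because $T$ is a spider, for any vertex $u$ not on leg $\ell$ (including $u=r$) one has $d(u,v_{\ell,\delta})=d(r,u)+\delta$: the distance to a point of leg $\ell$ sees only its depth plus the depth $\gamma_i:=d(r,c_i)$ of the tester. Hence if the mouse keeps ``decoy'' trajectories on many legs and moves them all with the same change of depth, then every decoy whose leg contains neither $c_i$ nor $c_{i-1}$ produces exactly the same bit, namely the sign of $(\delta_i-\delta_{i-1})+(\gamma_i-\gamma_{i-1})$, independent of which leg it lies on and of its absolute depth. I would make the mouse actually play one such ``bulk'' decoy, so that $b_i$ equals this common value. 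A short case analysis then shows that a decoy can be eliminated only on the at most two legs containing $c_i$ or $c_{i-1}$, and only when the mouse is forced to increase $\delta_i+\gamma_i$; in particular the cat can destroy at most two legs per step, and only by placing its test deep inside the leg it attacks. Every other leg retains its decoy automatically.

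Since $t$ decoys can be exhausted in $\Theta(t)$ steps, the crux, and the step I expect to be the main obstacle, is regeneration: keeping at least two deep legs alive for \emph{all} time rather than merely $\Theta(t)$ steps, under the severe constraint that every surviving decoy must agree on the single bit $b_i$ at every step. My intended mechanism is a two-cohort scheme. One cohort is parked deep, at depth about $t/6$, on as many legs as possible and is what certifies the radius bound; the second cohort is recycled by being drawn back through $r$, where a trajectory sitting at the centre is consistent with continuing into \emph{any} leg, and is then fanned back out, re-seeding decoys on every leg the cat has attacked. The delicate point is that the two cohorts sit at different depths and so do not want the same change of depth, yet both must still agree with $b_i$; by the computation above they disagree only at the steps where $\gamma_i-\gamma_{i-1}$ takes one exceptional value, and at such a step the mouse can stall the recycling cohort for one turn without disturbing the parked cohort. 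Alternating the roles of the two cohorts then sustains the invariant forever. The bulk of the write-up will be setting up this scheme formally, checking the boundary behaviour at $r$ and at the leg-ends, and verifying that throughout the recycling the parked cohort always keeps at least two legs at depth exceeding $t/12$, which by the radius criterion yields $\rad_T(M_i)>t/12$ for every $i$.
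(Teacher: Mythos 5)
Your setup is sound and matches the paper's: the same spider $T$, the same radius criterion (two vertices of depth $>t/12$ on distinct legs force $\rad_T(M_i)>t/12$), and the correct observation that for a test $c_i$ off a leg $\ell$ the distance to $v_{\ell,\delta}$ is $\gamma_i+\delta$ where $\gamma_i=d(r,c_i)$, so consistency of a decoy with the bit depends only on the sign of $\Delta\delta+\Delta\gamma_i$. The fatal problem is in the regeneration mechanism, which you yourself flag as the crux. Your parked cohort ($\Delta\delta=0$) and an outward-moving decoy ($\Delta\delta=+1$) disagree exactly when $\Delta\gamma_i=0$, and the cat can force $\Delta\gamma_i=0$ at \emph{every} step: it simply tests the vertex at depth $t/6$ on legs $1,2,3,\dots$ in turn, keeping its depth constant while sweeping all legs. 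Against this, your ``stall for one turn'' fix means the recycling cohort stalls forever and no leg is ever re-seeded. Meanwhile the same sweep destroys the parked cohort at one leg per step: if leg $\ell$ is tested at depth $t/6$ at step $i$ and some other leg at step $i+1$, the bulk bit is $b_{i+1}=1$ (since $\Delta\gamma_{i+1}=0$), but a decoy parked at depth $t/6$ on $\ell$ has $d(c_i,\cdot)=0$ and $d(c_{i+1},\cdot)=t/3$, so it is eliminated; a one-line computation (survival needs $s'+t/6\le|s-t/6|$ for depths $s,s'$ differing by at most $1$) shows no unit-speed dodge saves any decoy on the tested leg unless it has retreated all the way to $r$. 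So within $O(t)$ steps every deep decoy except the real mouse's is dead, outward motion is permanently blocked, and when the sweep reaches the real mouse's own leg the actual bit sequence breaks the bulk pattern and kills the rest. Your invariant therefore survives only $\Theta(t)$ steps, not ``for all time,'' which is precisely the part of the statement that needs proving.

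The paper escapes this trap by never requiring a stationary certificate. Its certificate is a \emph{pair} of trajectories $(m_i,w_i)$ that move adaptively and complementarily: when $\Delta\gamma_i\le0$ the real mouse moves one step toward $r$ (and the decoy stays), and when $\Delta\gamma_i>0$ the decoy moves one step away from $r$ (and the real mouse stays); in both cases the two produce the same bit, whatever the cat plays, so neither is ever sacrificed to protect the other. This drains the real mouse to $r$, after which both trajectories run outward \emph{together} (again with identical bits) onto branches chosen, by simulating the cat's known strategy in advance, so that they are not tested during the relevant phase; the radius bound is carried throughout by the pair itself (and, at the single moment the pair is at distance exactly $t/6$, by the ambiguity of the decoy's branch). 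To rescue your write-up you would have to abandon the parked-cohort certificate in favour of such an adaptively moving pair together with the look-ahead branch selection; as stated, your two-cohort scheme is refuted by the constant-depth sweep.
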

\noindent
Throughout the rest of this section, fix $n=t^2+1$ where $t$ is divisible by $12$. We define $T$ to be the tree on $n$ vertices formed from the star $K_{1,t}$ by subdividing each edge exactly $t-1$ times. We will call the centre vertex $u$ and refer to the subdivided edges as branches. We do not regard $u$ to be on any branch. The structure of $T$ means (informally) that, at each turn, the cat is only able to check whether the mouse lies on a single branch of the graph.

In the proof of Theorem \ref{prop:lower}, we will provide a strategy for the mouse where $m_i$ is always on a different branch to $c_{i-1}$ and $c_i$ and hence the cat can never determine from its tests which branch the mouse is on. We will also consider a second set of possible positions $(w_i)$ for the mouse such that the cat cannot tell the difference between a mouse at $m_i$ and a mouse at $w_i$. For every $i$, we will ensure that $d(m_i,w_i)>\frac{t}6$ and hence $\rad_G(M_i)>\frac{t}{12}$. This is sufficient to ensure that cat cannot localise the mouse up to distance $\frac{t}{12}$ in finite time.

\begin{proof}[Proof of Theorem \ref{prop:lower}]
Suppose that the cat is trying to localise the mouse to up distance $\frac{t}{12}$ on $T$. We will provide a strategy for the mouse such that $\rad_G(M_i)>\frac{t}{12}$ for every $i\in\bN$. Hence, the cat is never able to localise the mouse on $T$.

To simplify the numbers, the first turn is at time $t=0$. At this time, the cat only knows the graph $T$. The mouse knows what $T$ looks like as well as the strategy employed by the cat. The mouse will sometimes give the cat additional information about its position or movements. We will see later that this does not impact the result.

The mouse employs the following strategy, making the moves given by $(m_i)$. In what follows, we will also consider a second set of possible moves $(w_i)$ for the mouse. At each turn, we will ensure that both $m_i,w_i\in M_i$ and $d(m_i,w_i)>\frac{t}6$. Hence $\rad_G(M_i)>\frac{t}{12}$ as required.

At several stages in the mouse's strategy, it needs to choose a branch which the cat will not check for a given period of time $\tau$ where $\tau<t$. As $T$ has $t$ branches and the cat can only check one per turn, such a branch certainly exists. In order to find such a branch, it is necessary to look ahead at the cat's strategy. Note that, as long as the cat does not play on the branch within the given time period, it does not matter which branch the mouse chooses because its movements, and consequently the cat's movements, will be the same.

Here is the strategy which the mouse will follow:
\begin{enumerate}
\item The mouse chooses a branch which the cat will not check for $\frac{2t}3$ turns and chooses $m_0$ to be the vertex at distance $\frac t4$ from $u$ on this branch. Similarly, let $w_0$ be a vertex at distance $\frac t4$ from $u$ on another branch that the cat will not check for $\frac{2t}3$ turns. The mouse tells the cat that it is at distance $\frac t4$ from $u$ so $M_0=\{v\in V(T):d(v,u)=\frac t4\}$. The mouse also tells the cat $\frac t4$ branches which it is not located on (none of which include $m_0$ or $w_0$).

\item Over the next $\frac t6$ turns, the cat will gradually lose information about the position of the mouse. For $1\leq i\leq\frac t6$, define $m_i$ and $w_i$ as follows:
\begin{itemize}
\item If $d(c_i,u)\leq d(c_{i-1},u)$, then $m_i$ is one vertex closer to the $u$ than $m_{i-1}$ and $w_i=w_{i-1}$.
\item If $d(c_i,u)>d(c_{i-1},u)$, then $m_i=m_{i-1}$ and $w_i$ is one vertex further away from $u$ than $w_{i-1}$.
\end{itemize}
Note that, in the first case, we have $b_i=1$ and, in the second case, we have $b_i=0$. In both cases, the cat receives the same value of $b_i$ for a mouse at $m_{i-1}$ and then $m_i$ and a mouse at $w_{i-1}$ followed by $w_i$ and so cannot tell the difference between these positions. At time $\frac t6$, we have $\frac t3\leq d(m_{t/6},w_{t/6})\leq\frac{2t}3$.

\item Let $d=d(m_{t/6},u)$ so $\frac{t}{12}\leq d\leq\frac t4$. The mouse tells the cat that it will now run towards $u$ for $d$ time steps and proceeds to do so. For $\frac t6<i\leq\frac t6+d$, we have $m_i$ and $w_i$ are both one vertex closer to $u$ than $m_{i-1}$ and $w_{i-1}$ respectively.

\item At time $\frac t6+d$, we have $m_{t/6+d}=u$ and $d(w_{t/6+d},u)=\frac t6$. The mouse now chooses a branch on which the cat will not play in the next $\frac{11t}{12}$ turns (and different from the branch containing $w_i$). This is the branch on which the $(m_i)$ will now lie. The $(w_i)$ will remain on their original branch.

\item The mouse now tells the cat it is running away from the centre for $\frac t4$ time steps and does so. For $\frac t6+d<i\leq\frac{5t}{12}+d$, both $m_i$ and $w_i$ are one vertex further away from $u$ than $m_{i-1}$ and $w_i$ respectively.

\item Let $j=\frac{5t}{12}+d$. At time $j$, we have $d(m_j,u)=\frac t4$. Let $B$ be a branch which the cat has not checked in the previous $\frac t4$ turns and will not check for the next $\frac{2t}3$ turns, and which is different to the branch the mouse is currently on (i.e. $m_j\notin B$). Redefine $w_j$ to be the vertex at distance $\frac t4$ from $u$ on $B$. The mouse then tells the cat it is at distance $\frac t4$ from $u$. The situation is now exactly the same as in step 1.
\end{enumerate}
\noindent
Stage 6 is identical to stage 1, both in the positions of the mouse and the information known by the cat. Thus, the mouse may repeat stages 2 through 6 to avoid being localised by the cat indefinitely.

Let us now justify the mouse's choice of branch in Step 1. Provided that the mouse chooses a branch that the cat will not play on in the next $\frac{2t}3$ rounds, the sequence $(b_i)$ and the branches the cat plays on do not depend on which branch the mouse is on. Therefore, as the mouse knows the cat's strategy, the mouse can look into the future, simulate how the cat will play and hence find a suitable branch.

At stage 4, we have $m_{t/6+d}=u$. The cat has no idea which branch the mouse will run down and so all of the branches it previously eliminated now need checking again. However, the cat cannot tell the difference between a mouse at $m_{t/6+d}$ and one at $w_{t/6+d}$ and so it does not know that the mouse is at $u$. As $d(w_{t/6+d},u)=\frac t6$ but the cat does not know which branch $w_{t/6+d}$ is on, we still have $\rad_G(M_{t/6+d})>\frac{t}{12}$.

Between times $\frac t6+d$ and $j$, the cat has eliminated at most $\frac t4$ branches for a mouse that reached the centre at time $\frac t6+d$. At time $j$, the cat (depending on its play for the previous $\frac t4$ turns) may realise that the mouse was indeed at the centre at time $\frac t4+d$, but this is no longer helpful as the mouse is now at distance $\frac t4$ from the centre along an unknown branch.
\\
\\
Now consider the situation where the cat is not given any information except for $T$ and the values of the $b_i$ for $i\geq2$. The mouse can employ exactly the same strategy as it used above but without telling the cat anything. Let $M'_i$ be the possible positions of the mouse at time $i$ and $M_i$ be the sets given by the above argument where the cat has extra information. This additional information enables the cat to narrow down the mouse's possible position more accurately and so, for every $i$, we find that $M_i\subseteq M'_i$

Therefore, as the cat cannot localise the mouse up to distance $\frac t4$ when it is told some information about its position and movements, it definitely cannot localise the mouse when it does not have access to this extra knowledge.
\end{proof}
\noindent
Theorem \ref{thm:lower} follows as a corollary of Proposition \ref{prop:lower}.
\begin{proof}[Proof of Theorem \ref{thm:lower}]
Suppose $n=t^2+r$ where $t$ is a multiple of 12 and $r\in[24t+144]$. Let $T$ be the tree described in Proposition \ref{prop:lower} - the tree obtained by subdividing each edge of the star $K_{1,t}$ exactly $t-1$ times. Create the tree $T'$ on $n$ vertices by adding a branch $B$ of $r-1$ vertices to $T$. The cat and mouse play on $T'$. The mouse only plays on the subgraph $T$ and is never located on $B$. It uses the same strategy as in Proposition \ref{prop:lower} to avoid being localised up to distance $\frac{t}{12}$ on $T'$.
\end{proof}

\section{Upper Bound}
\label{sec:upper}
In this section, we will prove Theorem \ref{thm:upper}. In fact we prove a slightly more precise result.
\begin{theorem}
\label{thm:upper2}
\begin{itemize}
\item[(i)] Let $c > 0$. Then the cat can localise the mouse up to distance $\bigl(\frac{8}{c}+c\bigr) \sqrt{n}$ by time $\frac{4}{c}\sqrt{n}$.
\item[(ii)] The cat can localise the mouse up to distance $\frac{9}{2} \sqrt{n}$ by time $n$.
\end{itemize}
\end{theorem}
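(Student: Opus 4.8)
The plan is to read part~(i) of Theorem~\ref{thm:upper2} as a time--distance tradeoff and to let that tradeoff dictate the whole argument. Writing $T=\frac4c\sqrt n$ for the time budget, a quick rearrangement shows that the target distance $\bigl(\frac8c+c\bigr)\sqrt n$ equals exactly $2T+4n/T$, and the case $c=2\sqrt2$ (so $T=\sqrt{2n}$ and the distance is $4\sqrt2\,\sqrt n=\sqrt{32n}$) is precisely Theorem~\ref{thm:upper}. The split $2T+4n/T$ is the signpost: the $2T$ term should account for how far the mouse can drift during $T$ steps, while $4n/T$ should be the resolution to which a single sweep can pin down where the mouse was. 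So it suffices, for every $c$, to give a strategy running for $T$ steps that certifies $m_i\in B(v,\,2T+4n/T)$ for some tested time $i\le T$ and some vertex $v$.

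The engine is a ``warmer/colder'' sweep, which I would first analyse on the model case $G=P_N$, the path on $\{0,\dots,N-1\}$. Here the cat probes $c_i=i\delta$ for a jump length $\delta\approx 2n/T$, so the whole path is covered in $\approx T$ steps. Since $d(c_i,m_i)=|i\delta-m_i|$ and the mouse moves by at most $1$ while the probe jumps by $\delta\ge2$, a short computation shows that $b_i$ records, up to an additive $\pm1$ from the mouse's step, whether $m_{i-1}$ lies to the right of the midpoint $(i-\tfrac12)\delta$. As $i$ increases this boundary advances by $\delta$ per step, strictly faster than the mouse can move, so $(b_i)$ is essentially monotone: once the boundary passes the mouse it cannot be re-crossed. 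The single flip from $1$ to $0$ then pins $m_{i-1}$ to an interval of length $O(\delta)=O(n/T)$; since the flip occurs within $T$ steps the mouse has drifted only $O(T)$ further, giving $\rad_G(M_i)=O(n/T+T)$ at the flip time, and optimising $\delta$ recovers the stated constants.

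Two points need genuine care even on the path, and I would dispose of them first. If the sweep sees only $0$'s (the mouse sits behind the probe) there is no flip; but an all-$0$ prefix already forces $m_0\in[0,\delta/2)$, so the cat can certify a short interval at time $0$ instead. This, together with the requirement that $M_i$ be small using only the information available up to time $i$ (not with hindsight), is exactly why the strategy must be arranged so that the decisive flip, or the terminal certification, happens no later than time $T$; I would verify that the adaptive function $f$ can be set up to read off the flip and output the corresponding certified interval in time.

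The main obstacle is passing from the path to an arbitrary connected $G$, because $b_i$ compares \emph{graph} distances, whereas the sweep used that distance-to-the-probe was a clean, one-dimensional, $1$-Lipschitz coordinate. My plan is to fix a root $r$, replace the ambient coordinate by $\ell(v)=d(r,v)$ (which is automatically $1$-Lipschitz along the mouse's walk), and run the probe along a spanning DFS walk of length $2(n-1)$, covered in $T$ steps with jump $\approx 2n/T$ (this length is where the factor in $4n/T$ comes from). The real difficulty is that a one-dimensional localisation of $\ell(m_i)$ only confines the mouse to an annulus about $r$, not to a ball, so the crux is to choose the probe vertices along the walk so that the swept boundary is the boundary of a genuine ball $B(v,\rho)$ and the monotone flip certifies $m_i\in B(v,\rho)$ with $\rho=O(\sqrt n)$. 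I expect this ball-confinement step, together with checking that the adversarial mouse (which knows the entire strategy and may try to exploit the many branches off the spine to break monotonicity) still cannot avoid being caught by the flip, to be where essentially all of the work lies. Once that is in place, part~(ii) should follow from running the same sweep more slowly --- bounded jump length and total time $\approx n$ --- which shrinks the resolution term, removes the $\pm1$ slack, and yields the tighter constant $\tfrac92$.
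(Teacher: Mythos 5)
Your numerology is right --- with $T=\frac4c\sqrt n$ the target distance is exactly $2T+4n/T$, and your sweep analysis on the path $P_N$ is essentially sound --- but the proposal has a genuine gap exactly where you yourself locate it: the passage from the path to a general connected graph. Nothing in the proposal bridges it, and the bridge you sketch (probe along a DFS walk, use $d(r,\cdot)$ as a Lipschitz coordinate, arrange the ``swept boundary'' to be a ball boundary) runs into a structural obstruction rather than a technical one. The bit $b_i$ compares $d(c_i,m_i)$ with $d(c_{i-1},m_{i-1})$; on a path the set of mouse positions consistent with $b_i=1$ is (up to $\pm1$) a half-line, i.e.\ the complement of a ball, which is exactly why your flip is monotone and pins the mouse to an interval of length $O(\delta)$. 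On a general graph the corresponding ``side of the bisector'' sets are neither balls nor annuli, and along a DFS walk the distance from a fixed mouse to the probe oscillates (the walk goes down a branch and back up), so monotonicity of $(b_i)$ fails: a mouse on a side branch sees the probe approach and recede many times, and there is no single decisive flip. This is precisely the phenomenon exploited by the lower-bound tree of Section \ref{sec:lower} (the subdivided star): a probe travelling along a walk only ever learns about the branch it is currently on. Moreover, even if you could localise the coordinate $d(r,m_i)$, the annulus $\{v:d(r,v)\in[a,a+\delta]\}$ has radius up to $a$, which can be as large as the diameter; so annulus confinement really is insufficient, and no recipe is given for upgrading it to ball confinement.

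For comparison, the paper avoids the sweep entirely. For (i) it covers $V$ by $L\le\frac2c\sqrt n$ balls of radius $k=c\sqrt n$ (a maximal $c\sqrt n$-separated set gives such a cover) and runs a knockout tournament among the ball centres: the cat alternately probes the current champion and the next centre, and each comparison is corrupted by at most an additive $2$ from the mouse's movement, so after $2L$ steps the champion is within $4L+k$ of the mouse (Lemma \ref{lem:fat}). Each bit there compares distances to two \emph{chosen} centres rather than to consecutive points of a walk, so no one-dimensional structure is needed. Part (ii) is likewise not a ``slower sweep'': it requires a separate argument (Lemma \ref{lem:thin}) in which the cat exploits a sphere of size $<\ell/4$ around its current guess to cut its distance to the mouse by $\ell/2$ per phase, giving distance $\frac32K$ with $K=3\sqrt n$ by pigeonhole. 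As it stands, your argument proves the theorem for paths (and for graphs whose annuli about a root have bounded radius), but not for general $G$; the missing step is not a detail to be checked but the heart of the problem.
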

\noindent
Theorem \ref{thm:upper} follows from (i) by setting $c=\sqrt{8}$.

The theorem will follow from two lemmas giving upper bounds on how well a cat can localise a mouse in a graph.

The first lemma depends on how easily we can cover the vertices of the graph $G$ with balls. Thus we get stronger bounds when $G$ is a ``fat'' graph with lots of clustering.

\begin{lemma}
\label{lem:fat}
Let $L,k$ be natural numbers. Let $G=(V,E)$ be a graph whose vertices may be covered by $L$ balls of radius $k$. Then the cat can localise the mouse up to distance $4L+k$ in $G$ by time $2L$.
\end{lemma}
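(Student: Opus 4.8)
The plan is to use the $L$ ball centres $v_1,\dots,v_L$ as probe vertices and run a ``tournament'' that maintains a current champion centre which is approximately closest to the mouse. The guiding fact is that every vertex of $G$ lies within distance $k$ of some $v_j$, so at every time $i$ there is a centre with $d(v_j,m_i)\le k$; if the cat can pin down a centre that is close to the mouse at the final time, then $M_i$ is trapped in a ball and $\rad_G(M_i)$ is controlled. I record the comparison primitive I will use: if the cat tests the current champion $a$ at some turn and a challenger $b$ at the next turn, the returned bit reports the sign of $d(b,m_{i})-d(a,m_{i-1})$, i.e.\ an approximate comparison of the mouse's distances to $b$ and to $a$, with an error of at most $1$ coming from the single step of mouse movement.

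First I would set up the tournament. Start with champion $v_1$ (tested at time $1$) and present the remaining centres $v_2,\dots,v_L$ as challengers one at a time: to compare a challenger against the champion the cat tests the champion and then the challenger on consecutive turns, re-testing the champion first whenever the previous turn was a losing challenger, so that a challenger is always evaluated against a test of the champion made on the immediately preceding turn. Whichever vertex the bit declares (approximately) closer becomes the new champion. Each challenger costs at most two turns, so the whole process finishes by time $2L$.

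The core of the analysis is the potential $\phi(t)=d(\text{champion at time }t,\,m_t)$. I would establish two properties that follow directly from the observed bits together with the fact that the mouse moves at most one step per turn: $\phi$ increases by at most $1$ on any turn where the champion is unchanged, and $\phi$ does not increase on a turn where the champion changes (since a new champion $v_c$ wins exactly when $d(v_c,m_t)\le d(\text{old champion},m_{t-1})=\phi(t-1)$, the old champion having been tested at time $t-1$). Thus $\phi$ is $1$-Lipschitz upward and non-increasing at wins. To finish, let $v_n$ be a centre with $d(v_n,m_T)\le k$ at the final time $T\le 2L$, and consider the turn $s$ at which $v_n$ was presented as a challenger. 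Whether $v_n$ won or lost, I obtain $\phi(s')\le k+(T-s')$ for $s'=s$ or $s'=s-1$ respectively, using $|d(v_n,m_s)-d(v_n,m_T)|\le T-s$; the upward-Lipschitz property then gives $\phi(T)\le\phi(s')+(T-s')\le k+2T\le k+4L$. Hence the final champion lies within $4L+k$ of the mouse.

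The point I would stress, and which turns this into a genuine localisation statement rather than a claim about the true trajectory, is that the entire argument refers only to the sequence of bits the cat actually observes: the champion sequence is determined by the bits, and every inequality above (the Lipschitz and win properties, and the existence of a centre within $k$ of the endpoint) holds verbatim for any trajectory $\wt m$ consistent with those bits. Consequently $d(v_{\text{champion}},\wt m_T)\le 4L+k$ for every $\wt m_T\in M_T$, so $M_T$ is contained in a ball of radius $4L+k$ about the final champion and $\rad_G(M_T)\le 4L+k$. The main obstacle to watch is precisely this interplay: the comparisons are noisy (error $1$ per step) and compare distances measured at different times while the identity of the closest centre drifts as the mouse moves, so the crux is the potential-function bookkeeping showing these errors accumulate only linearly in $L$, and the observation that it applies uniformly to all consistent trajectories.
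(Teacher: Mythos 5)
Your proposal is correct and follows essentially the same route as the paper: the paper's Algorithm 1 is exactly your tournament of ball centres (champion re-tested, challenger tested on the next turn), and its Claim 4.5 is your potential argument bounding the drift of $d(\text{champion},m_t)$ by $2$ per comparison before invoking the covering to find a centre within $k$ of the mouse. Your potential-function phrasing and the explicit remark that the argument applies uniformly to all trajectories consistent with the observed bits are slightly cleaner presentations of the same analysis.
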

\noindent
Assuming this lemma, we can prove Theorem \ref{thm:upper2}(i).

\begin{proof}[Proof of Theorem \ref{thm:upper2} (i)]
Let $c>0$ and let $G=(V,E)$ be a connected graph. Let $\cR$ be a maximal subset of $V$ with pairwise distance at least $c\sqrt{n}$. Since the $\frac{c\sqrt{n}-1}{2}$-balls around vertices in $\cR$ are disjoint and $G$ is connected, $|\cR| \le \frac{2}{c}\sqrt{n}$. Thus the vertices of $G$ are covered by $\frac{2}{c}\sqrt{n}$ balls of radius $c\sqrt{n}$. So by Lemma \ref{lem:fat} the cat can localise the mouse up to distance $\left(\frac8c+c\right)\sqrt{n}$ by time $\frac{4}{c}\sqrt{n}$.
\end{proof}
\noindent
The second lemma depends on a different property of the graph $G$. The lemma gives stronger bounds when $G$ is a ``thin'' graph. We define the \textit{diameter} of any connected graph $G$ to be $\diam(G)=\max\{d(u,v):u,v\in V(G)\}$.

\begin{lemma}
\label{lem:thin}
Let $K$ be a natural number. Let $G=(V,E)$ be a graph such that for all $v\in V$, there exists an $\ell=\ell(v)<K$ such that $|\{w\in V:d(v,w)=\ell\}| <\frac\ell4$. Let $D$ be the diameter of $G$. Then the cat can localise the mouse up to distance $\frac{3K}2$ in $G$ by time $D/2$.
\end{lemma}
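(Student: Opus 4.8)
The plan is to recast the combinatorial game on the sets $M_i$ as a pursuit along a longest geodesic of $G$, using the small spheres supplied by the hypothesis as transversal separators that pin the mouse down off the geodesic. First I would record the structural consequence of the hypothesis: for each $v\in V$ fix a radius $\ell(v)<K$ with $|\{w:d(v,w)=\ell(v)\}|<\ell(v)/4$, and set $S(v):=\{w:d(v,w)=\ell(v)\}$ and $B(v):=\{w:d(v,w)<\ell(v)\}$. Then $S(v)$ separates $B(v)$ from the rest of $G$ using fewer than $K/4$ vertices. Since $B(v)$ has radius $\ell(v)-1<K$, the point of the whole argument is that \emph{once the cat knows the mouse lies in some single $B(v)$ (or a bounded enlargement of one) it has localised it up to distance at most $\tfrac{3K}{2}$}; so the task reduces to forcing $M_i$ inside one such region.

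The engine I would use is the comparison signal read against a \emph{fixed} reference vertex: if the cat tests the same vertex $c$ at two consecutive times then $b_i=0$ holds exactly when $d(c,m_i)>d(c,m_{i-1})$, so holding $c$ fixed lets the cat read off, step by step, whether the mouse is moving strictly away from $c$. I would take $c$ to be an endpoint $u$ of a diametral pair $u,v$ with $d(u,v)=\diam(G)=D$ and monitor the coordinate $d(u,m_i)\in[0,D]$, which changes by at most one per step. The intended dynamics is a detection front swept inwards from the ends of a fixed $u$–$v$ geodesic: each confirmed step of the mouse towards an end advances the cleared region from that end, and because the mouse can retreat at most one vertex per step the two cleared regions must meet after about $D/2$ steps, at which point $d(u,m_i)$ is determined up to an additive error of order $K$. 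Combining this coordinate information with the separator picture confines the mouse to (essentially) an interval of width at most $K$ along the geodesic intersected with one side of a small sphere, and the hypothesis $\ell(v)<K$ together with $|S(v)|<K/4$ bounds the radius of the residual set by $\tfrac{3K}{2}$.

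The step I expect to be the main obstacle is extracting \emph{reliable} positional information from these bits, because the signal is ambiguous: $b_i=1$ is returned both when the mouse approaches $c$ and when it stays put, so a single fixed reference yields only $d(u,m_0)+a_i-s_i\le d(u,m_i)\le d(u,m_0)+a_i$, where $a_i,s_i$ count the $0$- and $1$-bits, and the uncertainty $s_i$ grows without bound. This is where the thinness must re-enter. I would have the cat periodically \emph{reset} the ambiguity by switching its reference to the current best centre $v$ and using the fact that the mouse can leave $B(v)$ only through the fewer than $K/4$ vertices of $S(v)$ to certify, in a bounded number of rounds, that $d(v,m_i)$ has not spread past $\ell(v)$. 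The delicate parts are making each reset cost only bounded overhead (so the whole procedure still fits in $D/2$ rounds) and, crucially, verifying that the confinement survives along \emph{every} branch of the bit-tree: the mouse knows the strategy and is adversarial, so $\rad_G(M_i)\le\tfrac{3K}{2}$ must be forced on every realisable sequence $(b_i)$, not merely on a typical one. Once this per-phase bookkeeping is in place, the claims that the fronts meet by time $D/2$ and that the residual transversal set has radius at most $\tfrac{3K}{2}$ should follow from the separator sizes and the inequality $\ell(v)<K$.
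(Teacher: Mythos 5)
Your proposal has a genuine gap: the two mechanisms it rests on cannot be implemented in this query model, and the places where you say ``the delicate parts'' remain are not details --- they are the entire problem. First, the detection front swept from a diametral endpoint: as you yourself observe, a fixed reference $c$ yields only the one-sided bound $d(c,m_i)\le d(c,m_0)+z_i$ (where $z_i$ is the number of $0$-bits received), because $b_i=1$ conflates ``approached'' with ``stayed put''; the uncertainty grows linearly and nothing in the hypothesis repairs it, so no cleared region can be certified to advance and no meeting of fronts at time $D/2$ can be forced. Second, and more fundamentally, your reset step asks the cat to use the small sphere $S(v)$ as a guard set, certifying that the mouse has not escaped $B(v)$ because it would have to cross $S(v)$. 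But the cat's queries are comparative distance queries, not occupancy queries: testing a separator vertex $w$ only reveals whether $d(w,m_i)\le d(c_{i-1},m_{i-1})$, which gives no way whatsoever of detecting a crossing of $w$. A topological separator is of no use to a player who cannot watch it, so the confinement certificate at the heart of your plan has no implementation, and the bound $|S(v)|<\ell/4$ is never actually exploited.

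The paper uses the small spheres in a completely different way: not as guards but as a small pool of \emph{candidate next centres} for a hill-climbing step. The cat keeps a centre $v_n$ and runs a round-robin tournament, alternately testing the reigning champion and a fresh sphere vertex $w$ with $d(v_n,w)=\ell(v_n)$, crowning $w$ whenever the comparison does not show an increase. If the mouse is at distance at least $K>\ell$ from $v_n$, some sphere vertex lies on a shortest $v_n$--mouse path and is closer to the mouse by $\ell$; since the phase lasts at most $|S(v_n)|<\ell/4$ time steps, the mouse can inject noise at most $\ell/2$, so the tournament winner $v_{n+1}$ is at least $\ell/2$ closer to the mouse than $v_n$ was. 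The tracked distance therefore decreases at rate at least $2$ per time step from its initial value $d(v_1,m_1)\le D$, which is the only role the diameter plays (no geodesic or front appears), and the whole analysis is worst-case over all trajectories consistent with the bits, which is exactly what forces $\rad_G(M_i)\le\frac{3K}{2}$ on every branch of the bit-tree. This tournament step is the missing engine in your proposal: it is how comparative queries plus the smallness of the sphere are converted into guaranteed progress, and without it (or something equivalent) your outline does not close.
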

\noindent
Assuming this lemma, we can prove Theorem \ref{thm:upper2} (ii).

\begin{proof}[Proof of Theorem \ref{thm:upper2} (ii)]
Let $G=(V,E)$ be a graph with diameter $D$. For any $v\in V$, by the pigeonhole principle, there exists an $\ell<3\sqrt{n}$ such that $|\{w\in V:d(v,w)=\ell\}|<\frac\ell4$. We may then appeal to Lemma \ref{lem:thin} with $K=3\sqrt{n}$ and $D \le n$.
\end{proof}
\noindent
Let $L,k$ be natural numbers. Let $G=(V,E)$ be a graph and suppose $\{u_1,\ldots,u_L\}\subseteq V$ is such that $\bigcup_{i\in[L]}B_k(u_i)=V$. Consider the strategy for the cat given by Algorithm \ref{fatalgorithm}.
\\
\\
\begin{algorithm}[H]
\SetKw{KwFn}{Initialization}
    \KwFn{Set $i=1$, $w_i=1$}\;
    \While{$i<L$}{
        Set $c_{2i-1}=u_{w_i}$, $c_{2i}=u_{i+1}$\;
        \eIf{$d(c_{2i},m_{2i})\le d(c_{2i-1},m_{2i-1})$}{
            increase $i$ and set $w_i=i$\;
        }{
            increase $i$ and set $w_i=w_{i-1}$\;
        }
    }
\caption{Locating the mouse in a ``fat'' graph.}
\label{fatalgorithm}
\end{algorithm}
\vspace{\baselineskip}
\noindent
Note that if the mouse doesn't move, then this algorithm determines which of the $u_i$ is closest to the mouse. It takes them two at a time and chooses the closer one. When we run the same algorithm with a moving mouse, then the mouse may add noise, but only at a bounded rate. Lemma \ref{lem:fat} follows immediately from the following claim.

\begin{claim}
\label{claim:fat}
Independent of $(m_i)_{i\in\bN}$, we have $d(m_{2L-1},u_{w_L})\le4L+k$.
\end{claim}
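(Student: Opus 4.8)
The plan is to track a single potential measuring how far the current champion center is from the mouse at the moment it is tested, and to show that (a) this potential grows by at most $2$ per iteration of the loop, and (b) the covering hypothesis forces it to be small at the iteration where a genuinely good center is examined. Concretely, for each $i\in\{1,\dots,L\}$ set $\beta_i=d(m_{2i-1},u_{w_i})$. Note that $\beta_i$ is exactly the distance $d(c_{2i-1},m_{2i-1})$ that the cat implicitly measures when it tests the reigning champion $c_{2i-1}=u_{w_i}$, and that the claim's target quantity is precisely $\beta_L=d(m_{2L-1},u_{w_L})$.

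First I would establish the drift bound $\beta_{i+1}\le\beta_i+2$ for every $i$, splitting according to the branch taken at iteration $i$. If $d(c_{2i},m_{2i})\le d(c_{2i-1},m_{2i-1})$ then $w_{i+1}=i+1$ and $u_{w_{i+1}}=u_{i+1}=c_{2i}$, so $\beta_{i+1}=d(m_{2i+1},u_{i+1})\le d(m_{2i},u_{i+1})+1=d(c_{2i},m_{2i})+1\le d(c_{2i-1},m_{2i-1})+1=\beta_i+1$. Otherwise $w_{i+1}=w_i$ and $\beta_{i+1}=d(m_{2i+1},u_{w_i})\le d(m_{2i-1},u_{w_i})+2=\beta_i+2$. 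Both inequalities use only that the mouse moves at most one vertex per time step, so they hold independently of $(m_i)$.

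The crux is importing the covering hypothesis, which is a static statement about positions, into this dynamic tournament in which each center $u_r$ is examined only once, namely as the challenger $c_{2(r-1)}$ at the fixed time $2r-2$. The trick is to reference everything to the final mouse position: by the covering, choose $g$ with $d(u_g,m_{2L-1})\le k$, and control the displacement $d(m_{2g-2},m_{2L-1})\le 2(L-g)+1$ so that $d(u_g,m_{2g-2})\le k+2(L-g)+1$ at the moment $u_g$ is examined. I then analyse iteration $g-1$ by a dichotomy: either $u_g$ wins the comparison and becomes champion, whence $\beta_g\le d(u_g,m_{2g-2})+1$; or $u_g$ loses, in which case the reigning champion was already at least as close, giving $\beta_g\le\beta_{g-1}+2<d(u_g,m_{2g-2})+3$. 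Either way $\beta_g\le k+2(L-g)+3$. (The boundary case $g=1$, where $u_g$ is the initial champion rather than a challenger, is handled directly from $\beta_1=d(m_1,u_1)\le k+2(L-1)$.)

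Finally I would telescope the drift bound from $g$ to $L$, obtaining $\beta_L\le\beta_g+2(L-g)\le k+4(L-g)+3\le 4L+k$, using $g\ge1$. I expect the main obstacle to be exactly the reconciliation in the third paragraph: the covering center that is good for the final position is in general \emph{not} the center that happens to be examined at a time when the mouse is near it, so one cannot hope to find a challenger that wins simply because it is close. The argument must instead show that the champion is never worse than this reference center up to bounded drift, which is what the win/lose dichotomy at iteration $g-1$, combined with the monotone behaviour of $\beta_i$ encoded in the drift bound, accomplishes.
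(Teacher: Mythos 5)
Your proposal is correct and takes essentially the same route as the paper's proof: the identical drift bound $d(u_{w_{i+1}},m_{2i+1})\le d(u_{w_i},m_{2i-1})+2$, the same win/lose dichotomy showing the champion's distance is within an additive constant of the challenger $u_g$'s distance at the time $2g-2$ it is tested, and the same use of the covering hypothesis together with the mouse's bounded displacement to the final position $m_{2L-1}$. The only difference is organizational — the paper proves the comparison inequality for every index $i$ and then invokes the covering to specialize, whereas you invoke the covering first and run the chain of inequalities only for the chosen index $g$ — so the logical content is the same.
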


\begin{proof}
Let $i\in[L-1]$. Then $w_{i+1}$ is either $w_i$ or $i+1$. In the former case, since the mouse moves along an edge once per time step,
\begin{align*}
d(u_{w_{i+1}},m_{2i+1})&=d(u_{w_i},m_{2i+1})\\
&\le d(u_{w_i},m_{2i-1})+2.
\end{align*}
Otherwise $d(u_i,m_{2i})<d(c_{u_i},m_{2i-1})$ and $w_i = i$ in which case
\begin{align*}
d(u_{w_{i+1}},m_{2i+1})&=d(u_i,m_{2i+1})\\
&\le d(u_i,m_{2i})+1\\
&\le d(u_{w_i},m_{2i-1})+1.
\end{align*}
In either case, we have $d(u_{w_{i+1}},m_{2i+1})\le d(u_{w_i},m_{2i-1})+2$. By induction, we then have for all $i \le L$
\begin{equation*}
d(u_{w_k},m_{2L-1})\le d(u_{w_i},m_{2i-1})+2(L-i).
\end{equation*}

Now for each $i =2,\ldots,L$, we have $d(u_{w_i},m_{2i-1}) \le d(u_i, m_{2i-2}) + 1$. Therefore for all $i = 2,\ldots,L$
\begin{align*}
d(u_i,m_{2L-1})&\ge d(u_i,m_{2i-2})-2(L-i)-1\\
&\ge d(u_{w_i},m_{2i-1})-2(L-i)-2\\
&\ge d(u_{w_L},m_{2i-1})-4(L-i)-2\\
&\ge d(u_{w_L},m_{2i-1})-4L.
\end{align*}
Since $w_1=1$, the above bound also holds for $i=1$. Rearranging this, we get that $d(u_{w_L},m_{2i-1})\le4L+d(u_i,m_{2L-1})$ for all $i\le L$. But $(B_k(u_i))_{i\in[L]}$ forms a cover of $V$ and so $d(u_i,m_{2L-1})$ must be at most $k$ for one of the $i$ and so $d(u_{w_k},m_{2i-1})\le 4L + k$. 
\end{proof}
\noindent
Let $K$ be a natural number. Let $G=(V,E)$ be a graph such that, for all $v\in V$, there exists an $\ell=\ell(v)<K$ such that $|\{w\in V:d(v,w)=\ell\}|<\frac\ell4$. Let $D$ be the diameter of $G$ and for each vertex $v$ fix $\ell(v)$ such that $|\{w\in V:d(v,w)=\ell\}|<\frac\ell4$. Consider Algorithm \ref{thinalgorithm} which dictates a strategy for the cat given the movements of the mouse $(m_i)_{i\in\bN}$. Note that $T_i$ is not necessary for the strategy but we include it to simplify the analysis.
\\
\\
\begin{algorithm}[H]
\SetKw{KwFn}{Initialization}
    \KwFn{Set $i=0$, $j=1$, $T_1=0$ and pick $v_1 \in V$ arbitrarily}\;
    \While{$i<D$}{
        Set $U = \{w\in V:d(v_j,w) = \ell(v_j)\}$ \;
        Set $u_j = v_j$ \;
        \While{$U \neq \emptyset$}{
            Increase $i$ \;
            Pick $w \in U$ \;
        	    Set $c_{2i-1}=u_j$, $c_{2i}=w$\;
            \If{$d(c_{2i},m_{2i})\le d(c_{2i-1},m_{2i-1})$}{
                Set $u_j = w$\;
        	    }
            Set $U = U - w$\;
            }
        Set $T_{j+1} = i$, $v_{j+1} = u_j$\;
        Increase $j$     
    }
\caption{Locating the mouse in a ``thin'' graph.}
\label{thinalgorithm}
\end{algorithm}
\vspace{\baselineskip}
\noindent
Lemma \ref{lem:thin} follows immediately from the following claim.
\begin{claim}
\label{claim:thin}
$d(v_n,m_{2T_n-1}) \le \frac{3}{2}K$ for all $T_n \ge \frac{D}{2}-\frac{3}{4}K$.
\end{claim}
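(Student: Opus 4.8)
The plan is to track the single potential $D_j := d(v_j, m_{2T_j})$, the distance between the incumbent vertex $v_j$ chosen at the start of outer stage $j$ and the mouse at the corresponding time, and to show that it decreases at an essentially fixed rate until it drops below $\tfrac32 K$, after which it can never escape again. Throughout I write $\ell_j = \ell(v_j)$ and let $U_j$ denote the sphere $\{w: d(v_j,w)=\ell_j\}$ examined during stage $j$, so that $T_{j+1}-T_j=|U_j|$; the defining inequality $|U_j|<\ell_j/4$ then gives the integer bound $\ell_j\ge 4|U_j|+1$, which is exactly what makes the constants work out.

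Inside a stage I first record a \emph{noise bound}. Writing $u_j^{(s)}$ for the incumbent after $s$ passes of the inner loop, a case check on whether the test $b_{2i}$ (with $i=T_j+s$) accepts the challenger shows that in either case $d(u_j^{(s)},m_{2i})\le d(u_j^{(s-1)},m_{2i-1})+1$; since two time steps (hence at most two mouse moves) separate consecutive even-indexed measurements, this gives $d(u_j^{(s)},m_{2(T_j+s)})\le d(u_j^{(s-1)},m_{2(T_j+s-1)})+2$, and summing over the stage yields $D_{j+1}\le D_j+2|U_j|<D_j+\tfrac12\ell_j$ unconditionally. Next comes the \emph{progress bound}, valid whenever $D_j\ge\ell_j$ so that $U_j$ meets a shortest $v_j$--mouse path. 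Fixing $w^\ast\in U_j$ on such a path gives $d(w^\ast,m_{2T_j})=D_j-\ell_j$, and when $w^\ast$ is tested at some pass $s^\ast$ the same case analysis yields $d(u_j^{(s^\ast)},m_{2(T_j+s^\ast)})\le d(w^\ast,m_{2(T_j+s^\ast)})+1\le D_j-\ell_j+2s^\ast+1$; feeding the remaining $|U_j|-s^\ast$ passes through the noise bound gives $D_{j+1}\le D_j-\ell_j+2|U_j|+1$.

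The role of $\ell_j\ge 4|U_j|+1$ is now to absorb the per-stage $+1$. Summing the progress bound over any run of consecutive stages $1,\dots,n-1$ all satisfying $D_{j'}\ge\ell_{j'}$, I get $D_1-D_n\ge\sum\ell_{j'}-2T_n-(n-1)$, and since $\sum\ell_{j'}\ge\sum(4|U_{j'}|+1)=4T_n+(n-1)$ the offset and noise terms cancel, leaving $D_n\le D_1-2T_n\le D-2T_n$. I then assemble this with a dichotomy driven by $\ell_j<K$: if $D_j<\ell_j$ then $D_j<K$ and the noise bound gives $D_{j+1}<\tfrac32\ell_j<\tfrac32K$, while $D_j\ge\tfrac32K$ forces $D_j\ge\ell_j$, so the progress regime is exactly the regime above $\tfrac32K$. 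A one-line check gives \emph{persistence}: once $D_j<\tfrac32K$ it stays so, since in the progress sub-case $\ell_j\ge5$ makes $D_{j+1}<D_j$, and in the other sub-case $D_{j+1}<\tfrac32\ell_j<\tfrac32K$.

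Putting this together, consider any stage $n$ with $T_n\ge\tfrac{D}{2}-\tfrac34K$. Either some earlier stage already had $D_{j'}<\tfrac32K$, and persistence gives $D_n<\tfrac32K$; or every earlier stage lay in the progress regime, so the telescoped bound applies and $D_n\le D-2T_n\le D-2\bigl(\tfrac{D}{2}-\tfrac34K\bigr)=\tfrac32K$. The statement as written refers to $m_{2T_n-1}$ rather than $m_{2T_n}$; this is handled either by the single extra mouse step (the slack from $\ell_j\le K-1$ and the strict inequalities comfortably absorbs it) or, cleanly, by rerunning the identical noise and progress estimates for the potential $d(v_j,m_{2T_j-1})$, every measurement shifted by one step. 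I expect the main obstacle to be precisely this bookkeeping: pinning down the per-pass $+1/+2$ noise terms and checking that the integrality gain $\ell_j\ge 4|U_j|+1$ cancels the accumulated per-stage offsets tightly enough to reach the rate $2$ in $T_n$ (equivalently, distance falling at game-time rate $1$), rather than a weaker constant that would only yield a threshold near $D-\tfrac32K$.
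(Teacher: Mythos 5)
Your proposal is correct and is essentially the paper's own argument: the same incumbent-tracking analysis of the inner loop (noise bound of $+2$ per pass, progress of $-\ell_j$ via a sphere vertex on a shortest path to the mouse), the same use of $|U_j|<\ell_j/4$ to turn this into a net decrease at rate $2$ in $T_n$, and the same $\tfrac32K$ threshold arising from the ``close'' regime. The only differences are cosmetic: the paper tracks the odd-indexed potential $d(v_j,m_{2T_j-1})$ throughout (which disposes of the final re-indexing step you flag) and packages your dichotomy-plus-persistence as the single recursion $d(v_{n+1},m_{2T_{n+1}-1})\le\max\bigl\{d(v_n,m_{2T_n-1})-\tfrac{\ell(v_n)}{2},\tfrac32K\bigr\}$, which it then telescopes.
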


\begin{proof}
Suppose we run Algorithm \ref{thinalgorithm}. We claim that, for all $n\ge 1$, $T_{n+1}-T_n \le \frac{\ell(v)}{4}$ and
\begin{equation}
\label{ineq}
d(v_{n+1},m_{2T_{n+1}-1}) \le \max\left\{d(v_n,m_{2T_n-1})-\frac{\ell(v_n)}{2}, \frac{3}{2}K\right\}.
\end{equation}
Assuming these inequalities, we see that $T_n \le \frac{1}{4}\sum_{i=1}^{n-1}\ell(v_i)$ and
\begin{equation*}
d(v_n,m_{2T_n-1}) \le \max\left\{d(v_1,m_1) - \frac{1}{2}\sum_{i=1}^{n-1}\ell(v_i), \frac32K\right\}.
\end{equation*}
Since we have $d(v_1,m_1) \le D$, it follows that, if $\frac{1}{2}\sum_{i=1}^{n-1}\ell(v_i) \ge D-\frac32K$, then $d(v_n,m_{2T_n-1}) \le\frac32K$. Therefore, if $T_n \ge\frac12D-\frac34K$, then $d(v_n,m_{2T_n-1}) \le\frac32K$.

It remains to show that for all $n \ge 1$, $T_{n+1}-T_n \le \frac{\ell(v)}{4}$ and \eqref{ineq} holds. First suppose that $d(v_n,m_{2T_n-1}) \ge K$. Let $\ell<K$ be such that $|\{w\in V:d(v_n,w)=\ell\}|<\frac\ell4$. Note that there must be some $w$ such that $d(v_n,w)=\ell$ and $w$ is in a shortest $v_n-m_{2T_n-1}$ path. So then $d(w,m_{2T_n-1})=d(v_j,m_{2T_n-1})-\ell$. Let $i'\ge i$ be such that $c_{2i}=w$. Then
\begin{align*}
d(c_{2i},m_{2i})&\le d(w,m_{2T_n-1})+2i-(2T_n-1)\\
&=d(v_n,m_{2T_n-1})-\ell+2i-(2T_n-1).
\end{align*}
On the other hand, $d(v_{n+1},m_{2T_{n+1}-1})\le d(c_{2i},m_{2i})+2T_{n+1}-1-2i$, and so
\begin{align*}
d(v_{n+1},m_{2T_{n+1}-1})&\le d(v_n,m_{2T_n-1})-\ell+2i-(2T_n-1)+2T_{n+1}-1-2i\\
&=d(v_n,m_{2T_n-1})+2(T_{n+1}-T_n)-\ell.
\end{align*}
$T_{n+1}-T_n=|\{w\in V:d(v_n,w)=\ell\}|$ and so we have that
\begin{equation*}
d(v_{n+1},m_{2T_{n+1}-1})\le d(v_n,m_{2T_n-1})-\frac\ell2.
\end{equation*}

Now suppose that $d(v_n,m_{2T_n-1})<K$. Since $T_{n+1}-T_n\le\frac K4$, the mouse moves at most $\frac K2$ steps. Therefore 
\begin{equation*}
d(v_{n+1},m_{2T_{n+1}-1})\le d(v_n,m_{2T_n-1})+\frac K2\le\frac{3K}2.
\end{equation*}

In either case, a simple induction on $n \ge 1$, we have $T_{n+1}-T_n \le\frac14\ell(v_{T_n})$ and that either $d(v_{T_n},m_{2T_n-1}) \le\frac{3K}2$ or
\begin{equation*}
d(v_{n+1},m_{2T_{n+1}-1})\le d(v_n,m_{2T_n-1})-\frac\ell2.
\end{equation*}
\end{proof}

\section{Conclusion}
\label{sec:conclusion}
While we have established $\Theta(\sqrt{n})$ as a general upper bound for a cat localising a mouse on a graph, it would be nice to get better bounds depending on the structure of $G$. Trivially we have $\diam(G)$ as an upper bound. In addition to this, when the diameter of $G$ is $n - R$, we may appeal to Lemma \ref{lem:thin} with $K = O(R)$ to get that the cat can localise the mouse up to distance $O(R)$. Unfortunately, nothing more can be said when the diameter is between these two extreme cases. Indeed, consider lengthening two of the branches of the tree given in Section \ref{sec:lower}. Lemmas \ref{lem:fat} and \ref{lem:thin} are perhaps steps in the right direction here. Lemma \ref{lem:fat} works well when there is lots of clustering whilst Lemma \ref{lem:thin} works well when there is very little clustering.

Another interesting direction to consider is the case where multiple cats are trying to localise the mouse exactly. In particular, suppose that there is a collection\footnote{It seems there is a large choice of collective nouns for a collection of cats, including clowder, clutter, destruction (wild cats only), dout, glare, glorying, nuisance, and pounce \cite{T17}.} of $k$ cats $c^1,\dots,c^k$. At turn $i$, each cat $c^j$ chooses a vertex $c^j_i$ and is told the value of $b_i^j$. There is a simple argument which shows that 7 cats is sufficient to localise the mouse exactly on any tree in finite time. We believe that this is not optimal and suggest the following questions.
\begin{question}
What is the minimum number of cats that are needed to localise the mouse exactly on any tree?
\end{question}
\begin{question}
What happens on a general graph $G$? How accurately can $k$ cats localise a mouse?
\end{question}

\end{document}